
\documentclass[11pt,a4paper,oneside,onecolumn,fleqn]{article}
\usepackage{mathrsfs}
\usepackage{amsfonts}
\usepackage{longtable}
\usepackage{mathtools}
\usepackage{colortbl}
\usepackage{amssymb}

 \usepackage{rotating}
 \usepackage{multirow}

\usepackage{latexsym}
\usepackage{amsmath,amsthm}
\usepackage{epsf}
\usepackage{graphicx}
\usepackage{indentfirst}
\usepackage{cite}
\usepackage[numbers,sort&compress]{natbib}
\usepackage{extarrows}
\usepackage{caption2}

\pagestyle{plain} \headsep 0pt \topmargin 0pt \headheight 0pt
\oddsidemargin 0pt \evensidemargin 0pt \textheight 23 true cm
\textwidth 16 true cm \parskip 0mm
\parindent 20pt
\baselineskip 20pt

\renewcommand{\arraystretch}{1}

\theoremstyle{plain}
\newtheorem{thm}{\bf Theorem}[section]

\newtheorem{lem}[thm]{\bf Lemma}

\setlength{\bibsep}{0ex}

\theoremstyle{definition}

\theoremstyle{remark}


\begin{document}
\baselineskip 18pt

\title{\bf Magic squares with all subsquares of possible orders based on extended Langford sequences}
\date{}
    \author{    \small     Wen Li$^{a}$, Ming Zhong$^{a}$, Yong Zhang$^{b,*}$   \\
                     \footnotesize { \it $^a$School of  Science, Xichang University, Sichuan 615013,  China} \ \ \ \    \ \ \  \ \ \ \ \ \ \ \  \ \ \ \ \  \ \ \ \ \ \ \ \ \ \ \ \ \ \ \ \ \ \ \ \  \\
                     \footnotesize {\it $^b$School of Mathematics and Statistics, Yancheng Teachers University, Jiangsu 224002,  China}\\
  }

 \maketitle

 \begin{abstract}
 \noindent
A magic square  of order $n$  with all subsquares of possible orders (ASMS$(n)$) is a magic square which  contains a general magic square of each order  $k\in\{3, 4, \cdots, n-2\}$. Since the conjecture on the existence of an ASMS was proposed in 1994, much attention has been paid but very little is known except for few sporadic examples. A $k$-extended Langford sequence of defect $d$ and length $m$ is equivalent to a partition of $\{1,2,\cdots,2m+1\}\backslash\{k\}$ into differences $\{d,\cdots,d+m-1\}$.  In this paper, a construction of ASMS based on  extended Langford sequence is established. As a result, it is shown that there exists an ASMS$(n)$ for $n\equiv\pm3\pmod{18}$, which gives a  partial answer to Abe's  conjecture on ASMS.  \\
 \noindent {\it Keywords:} Magic square, Extend Langford sequence, Skolem sequence

 \end{abstract}

  {\begingroup\makeatletter\let\@makefnmark\relax\footnotetext{*Corresponding author: Y. Zhang(zyyctu@gmail.com, zyyctc@126.com). }
  {\begingroup\makeatletter\let\@makefnmark\relax\footnotetext{Research supported by the Natural Science Foundations of China (Nos.11301457,11371308).}

\section{Introduction}

    An $n\times n$ matrix $A$ consisting of $n^2$   integers is a {\it general magic square of order $n$},   denoted by  GMS$(n)$, if the sum of $n$ elements in each row, each column, main diagonal and back  diagonal is the same.  The sum is the \emph{magic number}.   A GMS$(n)$ is  a \emph{magic square}, denoted by  MS$(n)$,  if it consists of $n^2$ consecutive integers.  A lot of work has been done on   magic squares (\cite{Abe,Ahmed,Andrews,Handbook}).

 An  MS$(n)$  \emph{with all subsquares of possible orders}, denoted by  ASMS$(n)$, is an MS($n$) which contains a GMS$(k)$ for each integer $k$ such that $3\leq k\leq n-2$.

 The following is an ASMS$(6)$ (see \cite{Abe}) in which there is  a GMS$(3)$ in the lower right corner  and a GMS$(4)$ in the upper left corner.
    \begin{center}
  {\renewcommand\arraystretch{0.8}
 \setlength{\arraycolsep}{2.5pt}
 \small
$\left(\
      \begin{array}{ccccccccc     }
      \cline{1-4}
\multicolumn{1}{|c}{3}&6&34&\multicolumn{1}{c|}{28}&35&5\\
\multicolumn{1}{|c}{20}&31&2&\multicolumn{1}{c|}{18}&15&25\\
\multicolumn{1}{|c}{19}&12&26&\multicolumn{1}{c|}{14}&10&30\\\cline{4-6}
\multicolumn{1}{|c}{29}&22&9&\multicolumn{1}{|c|}{11}&13&\multicolumn{1}{c|}{27}\\   \cline{1-4}
8&36&16&\multicolumn{1}{|c}{33}&17&\multicolumn{1}{c|}{1}\\
32&4&24&\multicolumn{1}{|c}{7}&21&\multicolumn{1}{c|}{23}\\ \cline{4-6}
   \end{array}\   \right)
 $}
\end{center}

Abe gave the ASMS$(n)$ for $n=6,7,13,15,17$ and Kobayashi gave the ASMS$(n)$ for $n=8,9,10$ (\cite{Abe}).
Abe  \cite{Abe} also  proposed a conjecture on the existence of ASMS$(n)$ of odd $n$ as follows.

\vskip5pt
  {\bf Conjecture 2.12} (\cite{Abe}) There exists an  ASMS$(n)$ for every odd integer $n\geq5$.
\vskip5pt

In this paper,   Conjecture 2.12 is investigated and the extended Langford sequences are used to construct a quantity of submatrices (quasi-magic $2$-rectangle)  of an ASMS.

A \emph{$k$-extended Langford sequence of defect $d$ and length $m$}, denoted by $\mathcal{L}^k_{d,m}$, is a sequence $L=(l_1, l_2, \cdots, l_{2m+1})$ in which $l_k$ is empty, and each other member of the sequence comes from the set $\{d, d+1, \cdots, d+m-1\}$. Each $j\in \{d, d+1, \cdots, d+m-1\}$  occurs exactly twice in the sequence, and two occurrences are separated by exactly $j-1$ symbols.

An $\mathcal{L}^k_{d,m}$ is equivalent to a partition of $\{1,2,\cdots,2m+1\}\backslash\{k\}$ into differences $\{d,\cdots,d+m-1\}$. We refer the readers to the references \cite{Linek1998,Linek2004,Handbook,Francetic2009} for  (hooked, extended) Skolem squences and Langford squences and their applications such as cyclic Steiner systems \cite{Shalaby}, cyclic $m$-cycle systems \cite{Bryant},  and cyclically decomposing the complete graph into cycles \cite{Fua}.

 A  \emph{quasi-magic rectangle of size $(m,n)$}, denoted by QMR$(m,n)$, is an $m\times n$ array consisting of distinct
integers arranged so that the sum of the entries
of each row is a constant and each column sum is another  constant.
 A  \emph{ quasi-magic $3$-rectangle of size $(m,n,r)$}, denoted by QMR$(m,n,r)$,  is a set of $r$ QMR$(m,n)$s
 consisting of $mnr$ distinct integers such that if supposing the $r$ QMR$(m,n)$s then the sum of $r$ elements in each entry is the same.
 A  QMR$(m,n)$ consisting of consecutive integers is a \emph{magic rectangle}.  A   QMR$(m,n,r)$ consisting of consecutive integers is known as a   $3$-dimensional magic rectangle. We refer the readers to the references  \cite{Sun,ZLZS,Hagedorn2} for  details.

In this paper, we shall use extended Langford sequences and quasi-magic rectangles to prove the following.

  \begin{thm}\label{1.1}
 There exists an  ASMS$(n)$ for $n\equiv\pm3\pmod{18}$.
 \end{thm}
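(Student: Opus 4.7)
The plan is to realise an ASMS$(n)$ as a nested family $M_3 \subset M_4 \subset \cdots \subset M_{n-2} \subset M_n$, where each $M_k$ is a GMS$(k)$ placed concentrically inside $M_n$ and $M_n$ itself is an MS$(n)$. Writing $n = 3(6t\pm 1)$, so that $n$ is odd and equal to $3$ times an integer coprime to $6$, I would fix a small base GMS in the centre and then extend the order by two at each step, each extension adjoining two new rows and two new columns organised as a pair of complementary quasi-magic rectangles, i.e.\ a quasi-magic $2$-rectangle in the sense of the definition above. Such a frame preserves the GMS property precisely when the new cells can be partitioned into centre-symmetric pairs summing to a prescribed constant, with the four diagonal cells of the frame chosen to match both diagonal sums of the next level.

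The combinatorial heart of the construction is that this pair-partition condition is exactly the data of an extended Langford sequence $\mathcal{L}^k_{d,m}$: the partition of $\{1,2,\dots,2m+1\}\setminus\{k\}$ into pairs with differences $d, d+1, \dots, d+m-1$ pulls back, under an affine relabelling of the integers available at that step, to a partition of the required symmetric pairs in the frame. The hole position $k$ of the Langford sequence is used to place the single unpaired integer on a diagonal cell, simultaneously reconciling both diagonal identities. For each step I would invoke the known existence of $\mathcal{L}^k_{d,m}$ with the appropriate parameters from \cite{Linek1998,Linek2004,Francetic2009}, paste the resulting frame around the current $M_k$, and verify that the row, column, and diagonal sums of $M_{k+2}$ match the new magic constant by construction.

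The main obstacle is the simultaneous coordination of all the parameters across the induction. Three constraints must hold at once: (i) every extended Langford sequence $\mathcal{L}^k_{d,m}$ demanded by the recursion must actually exist, which imposes residue conditions on $d$ and $k$; (ii) at each level the four diagonal cells of the frame must carry values consistent with both main diagonals; and (iii) the integer intervals drawn upon by the successive frames, together with the entries of the base GMS and the extra diagonal cells, must partition $\{1,2,\dots,n^2\}$ exactly, so that the outermost square is a genuine MS$(n)$. The hypothesis $n\equiv\pm 3\pmod{18}$ is precisely the arithmetic condition that makes (i)--(iii) compatible, and I expect the bulk of the technical work to lie in the residue analysis justifying this congruence, the explicit choice of the base GMS and of the four diagonal labels at each stage, and a final bookkeeping lemma verifying that the labels used across all frames are disjoint and exhaustive.
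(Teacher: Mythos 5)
Your proposal has a structural flaw that makes the plan unworkable as stated, before one even reaches the deferred ``technical work.'' A $k\times k$ subsquare can be placed concentrically inside an $n\times n$ square only when $n-k$ is even, so for odd $n$ the squares $M_3$ and $M_4$ (and more generally any two consecutive orders) cannot both be concentric with $M_n$; the nested chain $M_3\subset M_4\subset\cdots\subset M_{n-2}\subset M_n$ with every member centred at the same point does not exist. Growing by bordered frames of width one on each side only produces subsquares of a single parity. The paper circumvents exactly this obstacle by abandoning concentricity: it builds $A$ from $3\times 3$ blocks over the symmetric range $[-\lambda,\lambda]$ (so every candidate subsquare has magic constant $0$), and then locates the GMS$(h)$ as a contiguous submatrix anchored at row/column $7$, $3$, or $5$ according to $h\bmod 3$. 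That three-offset anchoring, not a tower of frames, is what lets all orders $3\le k\le n-2$ of both parities appear.

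Second, your identification of the frame condition with an extended Langford sequence is not the role these sequences actually play, and I do not see how to make it work. An $\mathcal{L}^k_{d,m}$ encodes a partition of $[1,2m+1]\setminus\{k\}$ into pairs with prescribed \emph{differences}; a bordered-square frame needs its cells partitioned into centre-symmetric pairs with a prescribed common \emph{sum} (which is trivial to arrange) together with delicate row- and column-sum constraints on the two border strips (which is where bordered constructions actually fail or succeed). The paper instead converts the difference pairs $(a_i,b_i)$ with $b_i-a_i=i$ into zero-sum \emph{triples} $(u_i,v_i,-w_i)$ via $u_i+v_i=w_i$ (Lemma 2.1), and uses each triple to manufacture a quasi-magic $3$-rectangle of $3\times3$ blocks; a single $\mathcal{L}^k_{4,m}$ with $m=(n^2-171)/54$ drives the whole construction, with the congruence $n\equiv\pm3\pmod{18}$ entering only to guarantee $3\mid n$, $m\equiv 1\pmod 4$ (so Linek--Mor applies) and the parity of $w-5$ needed in Step 4. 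Your plan would need a genuinely new mechanism to extract sum-structured border strips from difference-structured Langford data, and a replacement for concentric nesting; as written, both key ideas of the actual proof are missing.
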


The rest of this paper is arranged as follows. Quasi-magic rectangles based on extended Langford sequences and  investigated in Section 2. Constructions and existence of ASMS  are provided in Section 3.

 \section{Preliminaries}

An integer set $\{a,a+1,\cdots,b\}$ is denoted as $[a,b]$. In this section we will introduce that an  $\mathcal{L}^k_{d,m}$   gives some special quasi-magic rectangles which are  the main blocks of an ASMS.

Let $L=(l_1, l_2, \cdots, l_{2m+1})$  be an   $\mathcal{L}^k_{d,m}$.
$L$ is  also written as a collection of ordered
pairs $\{(a_i, b_i)| i\in[d, d+m-1], b_i-a_i=i\}$ with $\bigcup_{i=d}^{d+m-1}\{a_i, b_i\} = [1, 2m+1]\setminus\{k\}$.
Adding $d+m-1$ to each number in the equality gives $\bigcup_{i=d}^{d+m-1}\{a_i+d+m-1, b_i+d+m-1\} = [d+m, d+3m]\setminus\{k+d+m-1\}$.
For each $i\in[d, d+m-1]$, adding $i$ to the pair $(a_i+d+m-1, b_i+d+m-1)$ as another coordinate gives
$\bigcup_{i=d}^{d+m-1}\{i, a_i+d+m-1, b_i+d+m-1\} = [d, d+3m]\setminus\{k+d+m-1\}$.
Let
\begin{center}
  $u_i=i, v_i=a_i+d+m-1, w_i=b_i+d+m-1$, $ i\in[d, d+m-1].$
\end{center}
It  gives a collection of ordered triples
$\{(u_i, v_i, w_i)|i\in[d, d+m-1]\}$ with
$\bigcup_{i=1}^{m}\{u_i, v_i, w_i\}=[d, d+3m]\setminus\{k+d+m-1\}$ and $u_i+v_i=w_i$.
We also have a collection of ordered triples
$\{(u_i, v_i, -w_i), (-u_i, -v_i, w_i)|i\in[d, d+m-1]\}$ with
$\bigcup_{i=d}^{d+m-1}(\{u_i, v_i, -w_i\})\bigcup\{-u_i, -v_i, w_i\})=[-d-3m,-d]\cup[d, d+3m]\setminus\{k+d+m-1, -k-d-m+1\}$ and $u_i+v_i=w_i$.

Further, let
\begin{center}
$x_0=k+d+m-1, y_0=-(k+d+m-1), z_0=0$,   \ \  \ \ \ \ \ \ \ \ \ \  \\
$x_i=u_{i+d-1}, y_i=v_{i+d-1}, z_i=-w_{i+d-1},i\in[1, m]$,  \ \  \ \ \ \   \ \ \ \ \ \ \ \ \ \\
$x_{-i}=-u_{i+d-1}, y_{-i}=-v_{i+d-1}, z_{-i}=w_{i+d-1},i\in[1, m].$  \ \ \ \ \ \ \ \ \\
\end{center}
and
\begin{center}
 \ \ \ \ \ \ \ \ \  \ \ \ \ \ \ \ \ \   \ \ \ \  \  \ \ \  \ \  \ \ \ \ \ \ \ \ \  \ \ \ \ \ \ \ \ \   $T_i=(x_i,y_i,z_i), i\in[-m,m]$.  \ \ \ \ \ \ \ \ \  \ \ \ \ \ \ \ \ \  \ \ \ \ \ \ \ \ \  \ \ \ \  \ \ \ \  \ \ \ \ \ \ \ \     (\romannumeral1)
\end{center}
We get a partition
$[d,d+3m]\cup [-d-3m,-d] \cup \{0\}= \bigcup_{i\in [-m,m]}\{x_i,y_i,z_i\}.$
Clearly, each triple has the property that the sum of three members is zero. So we have

\begin{lem} \label{triple}
If there is an  $\mathcal{L}^k_{d,m}$ then there is a partition  of
the set $[d,d+3m]\cup [-d-3m,-d] \cup \{0\}$ into triples each having the property that the  sum of the members is  zero.
\end{lem}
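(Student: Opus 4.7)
The plan is to verify that the construction spelled out in the preceding paragraphs actually produces the desired partition; essentially all ingredients are already in place, so the proof is a careful bookkeeping exercise. First I would write the $\mathcal{L}^k_{d,m}$ as the set of ordered pairs $\{(a_i, b_i) : i \in [d, d+m-1]\}$ with $b_i - a_i = i$ and $\bigcup_{i=d}^{d+m-1}\{a_i, b_i\} = [1, 2m+1] \setminus \{k\}$. Setting $u_i = i$, $v_i = a_i + d + m - 1$, $w_i = b_i + d + m - 1$ as in the text, the relation $w_i = u_i + v_i$ is immediate from the defect condition.

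Next I would track the disjoint-union identity. The values $u_i$ cover $[d, d+m-1]$ bijectively. Shifting the original partition $\{a_i, b_i\}$ by $d+m-1$ shows that the $v_i$'s and $w_i$'s jointly cover $[d+m, d+3m] \setminus \{k + d + m - 1\}$ bijectively. Combining these two observations,
\begin{equation*}
\bigcup_{i=d}^{d+m-1}\{u_i, v_i, w_i\} \;=\; [d, d+3m] \setminus \{k + d + m - 1\},
\end{equation*}
and applying the sign-flip $(u_i, v_i, w_i) \mapsto (-u_i, -v_i, -w_i)$ yields the symmetric statement for $[-d-3m, -d]$.

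Then I would assemble the triples $T_i$ defined in (\romannumeral1). For $i \in [1, m]$, the triple $T_i = (u_{i+d-1}, v_{i+d-1}, -w_{i+d-1})$ has sum zero by $u_j + v_j = w_j$; for $i \in [-m, -1]$, $T_i = -T_{-i}$ is also zero-sum; and $T_0 = (k+d+m-1, -(k+d+m-1), 0)$ is zero-sum by inspection. Taking the union of all coordinate entries of $T_i$ for $i \in [-m, m]$, the positive entries contribute $[d, d+3m] \setminus \{k+d+m-1\}$ from $T_1, \dots, T_m$, together with $k+d+m-1$ from $T_0$; the negative entries contribute the mirror set and $-(k+d+m-1)$; and $T_0$ also supplies $0$. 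This recovers exactly $[d, d+3m] \cup [-d-3m, -d] \cup \{0\}$.

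Finally, a cardinality count $3(2m+1) = 6m+3 = |[d,d+3m] \cup [-d-3m,-d] \cup \{0\}|$ confirms that the cover is in fact a partition (no element appears twice). The only nontrivial step is verifying the shifted set identity for the $v_i$'s and $w_i$'s, which is just the defining property of $\mathcal{L}^k_{d,m}$ after translating indices by $d+m-1$; the rest is symmetry and arithmetic.
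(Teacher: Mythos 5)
Your proposal is correct and follows essentially the same route as the paper, which proves the lemma by exactly this construction: shift the pairs $(a_i,b_i)$ by $d+m-1$, use $w_i=u_i+v_i$, adjoin the triple $(k+d+m-1,-(k+d+m-1),0)$, and take sign-flipped copies. One tiny bookkeeping slip: the positive entries of $T_1,\dots,T_m$ are only the $u_j$'s and $v_j$'s; the $w_j$'s enter as the positive third coordinates of $T_{-1},\dots,T_{-m}$ — but your cardinality count and the sign-flip observation already cover this, so the argument stands.
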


A QMR$(m,n)$ is denoted by QMR$^*(m,n)$ if the row sum  is zero and the column sum is also zero.
A QMR$(m,n,r)$ is denoted by QMR$^*(m,n,r)$ if it consists of  $r$ QMR$^*(m,n)$s  and if they are supposed then the sum of $r$ numbers in each entry is zero.

  Suppose that there is an  $\mathcal{L}^k_{4,m}$. Let $T_i (i\in[-m,m])$ be the triples  given by  (\romannumeral1).
For any $T_i(i\in[-m,m])$  we define a  $3\times3\times3$ array $M_i=(M_{i,1},M_{i,2},M_{i,3})$, where $M_{i,1}, M_{i,2}, M_{i,3}$ are given below.
\vskip5pt
 \begin{center}
 {\renewcommand\arraystretch{1}
\setlength{\arraycolsep}{2.0pt}
\small
\ \ \ \ \ \ \ \ $\left(
      \begin{array}{ccc}
 9x_i+1&9z_i-4&9y_i+3\\
 9z_i-3&9y_i+4&9x_i-1\\
 9y_i+2&9x_i&9z_i-2\\
\end{array}  \right),
       $
        $\left(
      \begin{array}{ccc}
 9y_i-3&9x_i+4&9z_i-1\\
 9x_i+2&9z_i&9y_i-2\\
 9z_i+1&9y_i-4&9x_i+3\\
\end{array}\right),
$                $\left(
      \begin{array}{ccc}
 9z_i+2&9y_i&9x_i-2\\
 9y_i+1&9x_i-4&9z_i+3\\
 9x_i-3&9z_i+4&9y_i-1\\
\end{array} \right).$}\ \ \ \ \ \ \ \ (\romannumeral2)
         \end{center}
         \vskip5pt
 It is readily verified that each  $M_i$ is a  QMR$^*(3,3,3)$,  $i\in [-m,m]$, and
the entries of all $M_i(i\in [-m,m])$ run over the set $S=[-4,4]\cup [32,40+27m] \cup [-40-27m,-32]$. Clearly, $|S|=27(2m+1)
$.  So we have the following.

\begin{lem} \label{triple-2}
If there is an  $\mathcal{L}^k_{4,m}$ then there  are $2m+1$ QMR$^*(3,3,3)$s with the entries run over the set
 $S$.
\end{lem}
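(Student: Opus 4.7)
The plan is to verify two things about the construction (\romannumeral2): first, that every $M_i$ is a QMR$^*(3,3,3)$, and second, that as $i$ ranges over $[-m,m]$ the $27(2m+1)$ entries of the $M_i$ are distinct and fill exactly the set $S$.

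For the first task I would lean on the identity $x_i+y_i+z_i=0$ built into the triples (\romannumeral1). A glance at (\romannumeral2) shows that every row of each of $M_{i,1}$, $M_{i,2}$, $M_{i,3}$ contains precisely one entry of each of the forms $9x_i+a$, $9y_i+b$, $9z_i+c$, and that the three offsets $a,b,c$ always sum to zero; the same pattern holds down every column and also position by position through the three layers. Consequently each row sum, column sum, and stacked positional sum equals $9(x_i+y_i+z_i)+0=0$, confirming that $M_i$ is a QMR$^*(3,3,3)$. This step is entirely mechanical, once one notices that the shift patterns in (\romannumeral2) have been arranged so that each of $9x_i$, $9y_i$, $9z_i$ picks up every offset in $[-4,4]$ exactly once across the 27 cells.

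For the entry set, the previous observation forces the 27 entries of $M_i$ to be $\{9u+c : u\in\{x_i,y_i,z_i\},\ c\in[-4,4]\}$. Lemma~\ref{triple} applied with $d=4$ gives
\[
\bigcup_{i\in[-m,m]}\{x_i,y_i,z_i\}=[4,4+3m]\cup[-4-3m,-4]\cup\{0\}.
\]
The contribution from $u=0$ is $[-4,4]$; for consecutive values of $u$ the nine-blocks $[9u-4,9u+4]$ and $[9(u+1)-4,9(u+1)+4]=[9u+5,9u+13]$ are disjoint and abut, so the union over $u\in[4,4+3m]$ telescopes to $[32,40+27m]$, and symmetrically the union over $u\in[-4-3m,-4]$ gives $[-40-27m,-32]$. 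These three pieces are pairwise disjoint, so all $27(2m+1)$ entries of the $M_i$ are distinct and together exhaust $S$.

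The only real point to watch is the disjointness of the consecutive nine-blocks above, which reduces to the trivial fact that the centres $9u$ and $9(u+1)$ are exactly $9$ apart while each block has width $9$. Beyond that, the proof is just a routine check against the arrays (\romannumeral2), and there is no substantive obstacle.
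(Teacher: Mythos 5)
Your proposal is correct and follows the same route as the paper, which simply asserts that the verification of (\romannumeral2) is routine; you have filled in exactly the intended check, namely that every row, column, and stacked position of $M_i$ picks up one term of each form $9x_i+a$, $9y_i+b$, $9z_i+c$ with $a+b+c=0$, so all sums reduce to $9(x_i+y_i+z_i)=0$, and that the offsets attached to each of $9x_i,9y_i,9z_i$ exhaust $[-4,4]$, so the entry sets are the abutting nine-blocks $[9u-4,9u+4]$ over the partition from Lemma~\ref{triple}, which telescope to $S$.
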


We should point out that each   $-M_i(i\in [1,m])$ is also a QMR$^*(3,3,3)$ and the element set of $-M_i$ is exactly
the  element set of $M_{-i}$.  Thus $M_0, \pm M_1, \cdots, \pm M_m$ are also the  QMR$^*(3,3,3)$s with the property mentioned in Lemma \ref{triple-2}.
Further, the sum of the elements in the main diagonal of  each $\pm M_{i,2} (i\in [0,m])$ is zero.

\vskip10pt

\noindent {\bf Example 1}  The sequence $L=(l_1, l_2, \cdots, l_{11})$  listed below is an $\mathcal{L}^6_{4,5}$.
 \begin{center}
$
      \begin{array}{ccccccccc ccccccccc ccccccccc   }
l_1&l_2&l_3&l_4&l_5&l_6&l_7&l_8&l_9&l_{10}&l_{11}\\
8&6&4&7&5&&4&6&8&5&7
   \end{array}
 $
\end{center}
By Lemma \ref{triple} there is a partition of the set $[4,4+3m]\cup [-4-3m,-4] \cup \{0\}$ into  triples as follows.
\begin{center}
\ \ $T_0=(14,-14,0)$,\ \ \ \ \ \ \ \  \ \ \ \   \ \  \ \ \ \ \  \ \ \ \ \ \ \  \ \ \ \ \ \ \ \\
$T_1=(4,11,-15)$,\ \  $T_{-1}=(-4,-11,15)$,\\
$T_2=(5,13,-18)$,\ \  $T_{-2}=(-5,-13,18)$,\\
$T_3=(6,10,-16)$,\ \  $T_{-3}=(-6,-10,16)$,\\
$T_4=(7,12,-19)$,\ \  $T_{-4}=(-7,-12,19)$,\\
$T_5=(8,9,-17)$, \ \ \ $T_{-5}=(-8,-9,17)$.\  \ \
\end{center}
By Lemma \ref{triple-2} we have a QMR$^*(3,3,3)$, $M_0=(M_{01}, M_{02}, M_{03})$,  where
 \begin{center}
 {\renewcommand\arraystretch{1}
\setlength{\arraycolsep}{1.6pt}
\footnotesize
$M_{0,1}=\left(
      \begin{array}{ccc}
127&-4&-123\\
-3&-122&125\\
-124&126&-2\\
   \end{array}\right),
       $ \ \
        $M_{0,2}=\left(
      \begin{array}{ccc}
-129&130&-1\\
128&0&-128\\
1&-130&129\\
   \end{array}\right),
$\ \
                $M_{0,3}=\left(
      \begin{array}{ccc}
2&-126&124\\
-125&122&3\\
123&4&-127\\
   \end{array}\right).
 $}
         \end{center}
 One can easily list the other 10 QMR$^*(3,3,3)$s $\pm M_i, i\in[1,5]$.   The entries of these 11 QMR$^*(3,3,3)$s run over the set $[-4,4]\cup [32,175]\cup [-175,-32]$.

\vskip5pt

\section{Proof of  Theorem 1.1}

Let $n\equiv\pm3\pmod{18}, n\geq21$. Denote $n=18u\pm3$ and $\lambda=(n^2-1)/2$.
We shall construct an ASMS$(n)$
$A=(a_{ij}), i,j\in [1,n]$ over the set $[-\lambda,\lambda]$.

  Suppose that there is an  $\mathcal{L}^k_{4,m}$.  Let $T_i=(x_i,y_i,z_i) (i\in[-m,m])$ be the ordered triples defined as (\romannumeral1) and
 let $M_0, M_i,-M_i, i\in [1,m]$   be the   $2m+1$ QMR$^*(3,3,3)$s defined as (\romannumeral2). The entries of these  QMR$^*(3,3,3)$s
    run over  the set $S=[-4,4]\cup [32,40+27m] \cup [-40-27m,-32]$ by Lemma \ref{triple-2}. Note that $|S|=27(2m+1)$.

Let $m=6u^2\pm2u-3$. Then $27(2m+1)=n^2-144$, i.e., $m=(n^2-171)/54$. So, $S\subset[-\lambda,\lambda]$,  $|[-\lambda,\lambda]|-|S|=144$.

Write $w=\frac{n}{3}=6u\pm1$. Partition  $A$ into submatrices of order 3,
$A=(A_{st}), \ s,t\in [1,w].$
We use binary Cartesian product to denote the index set of $A_{st}$, i.e.,   $(s,t)\in [1,w]\times[1,w]$. Divide $ [1,w]\times[1,w]$ into four parts as follows.

$V_1=[1,4]\times[1,4]\cup\{(w,w)\}\setminus\{(4,4)\},$
 \ \ \     $V_2=[1,2]\times[5,w],$

$V_3=[5,w]\times[1,2],$\ \  \ \ \ \ \ \ \ \ \ \ \ \ \  \ \ \ \ \ \   \ \ \ \ \ \ \ \  \ \
$V_4=[3,w]\times[3,w]\setminus \{(3,3),(3,4),(4,3),(w,w)\}.$\\

By calculation we have $[-\lambda,\lambda]\setminus S=[5,31]\cup[-31,-5]\cup [\lambda-44,\lambda]\cup[-\lambda,44-\lambda]$ and $|[-\lambda,\lambda]\setminus S|=144$. We shall fill these 144 numbers in $V_1$  and the rest numbers of $[-\lambda,\lambda]$  in  $V_2, V_3$ and $V_4$ in a proper way.
The construction follows the following four steps.

\vskip5pt
{\bf Step 1} The following table gives the blocks $A_{s\times t}$ with indices  $(s,t)\in V_1\setminus\{(w,w)\}$.
 \begin{center}
 {\renewcommand\arraystretch{1.0}
\setlength{\arraycolsep}{2.2pt}
\footnotesize
 $      \begin{array}{|c|c|c|c|c|c|c|c|c|c|c|c|  }
 \hline
17&\lambda-17&-25&29&24-\lambda&\lambda-20&12-\lambda&\lambda-26&6-\lambda&\lambda-22&42-\lambda&-20\ \ \\ \hline
17-\lambda&-17&25&-29&\lambda-24&20-\lambda&\lambda-12&26-\lambda&\lambda-6&22-\lambda&\lambda-42&20\\ \hline
-22&22&14&\lambda-9&13-\lambda&\lambda-29&1-\lambda&37-\lambda&\lambda-27&11-\lambda&\lambda-21&10\\ \hline
30&-30&9-\lambda&-14&\lambda-13&29-\lambda&\lambda-1&\lambda-37&27-\lambda&\lambda-11&21-\lambda&-10\\ \hline
-23&23&\lambda-5&5-\lambda&-11&8-\lambda&\lambda-16&\lambda-15&34-\lambda&\lambda-4&10-\lambda&-6\\ \hline
26&-26&\lambda-38&38-\lambda&\lambda-8&11&16-\lambda&15-\lambda&\lambda-34&4-\lambda&\lambda-10&6\\ \hline
\lambda&0-\lambda&36-\lambda&\lambda-36&3-\lambda&\lambda-3&-5&\lambda-39&44-\lambda&-18&-13&31\\ \hline
\lambda-32&32-\lambda&\lambda-41&41-\lambda&30-\lambda&\lambda-30&7-\lambda&21&\lambda-28&-9&28&-19\\ \hline
35-\lambda&\lambda-35&25-\lambda&\lambda-25&\lambda-14&14-\lambda&\lambda-2&18-\lambda&-16&27&-15&-12\\ \hline
\lambda-43&43-\lambda&\lambda-31&31-\lambda&40-\lambda&\lambda-40&18&13&-31&&&\\ \hline
19-\lambda&\lambda-19&23-\lambda&\lambda-23&\lambda-33&33-\lambda&9&-28&19&&&\\ \hline
-24&24&8&-8&-7&7&-27&15&12&&&\\\hline
   \end{array}$}
         \end{center}
         \vskip5pt
Taking $A_{ww}=-A_{33}$ together with the above table  gives all the blocks $A_{s\times t}$ with indices  $(s,t)\in V_1$.  Since $n\geq21$ the element set of the blocks $A_{s\times t}$ with indices  $(s,t)\in V_1$  is exactly $[-\lambda,\lambda]\setminus S$.

 \vskip5pt
{\bf Step 2}   Performing   row permutation
$\left(
      \begin{smallmatrix}
1&2&3&4&5&6 \\
2&4&6&1&3&5
   \end{smallmatrix}
      \right)$
 and column  permutation
$\left(
      \begin{smallmatrix}
1&2&3 \\
2&3&1
   \end{smallmatrix}
      \right)$
on the matrix
$\left(
      \begin{smallmatrix}
M_{i,2} \\
-M_{i,2}
   \end{smallmatrix}
      \right)$
gives a $6\times3$ matrix as follows.
 \begin{center}
 {\renewcommand\arraystretch{0.8}
\setlength{\arraycolsep}{2.8pt}
\small
        $P_i=\left(
      \begin{array}{ccc}
      1-9z_i&3-9y_i&-9x_i-4\\
9z_i-1&9y_i-3&9x_i+4\\
2-9y_i& -9x_i-2&-9z_i\\
9y_i-2& 9x_i+2&9z_i\\
-9x_i-3& -9z_i-1&4-9y_i\\
9x_i+3& 9z_i+1&9y_i-4\\
\end{array}\right), i\in[1,w-4].
$}
         \end{center}
    Taking
     \begin{center}
      $\left(
      \begin{smallmatrix}
A_{1,i+4}  \\
A_{2,i+4}
   \end{smallmatrix}
      \right)=P_i, i\in[1,w-4]$
     \end{center}
gives all the blocks $A_{s,t}$ with $(s,t)\in V_2$.

\vskip5pt
{\bf Step 3} Performing   column  permutation
 $\left(
      \begin{smallmatrix}
1&2&3&4&5&6 \\
1&3&5&2&4&6
   \end{smallmatrix}
      \right)$  on the matrix  $(M_{i,3}^T, -M_{i,3}^T)$ gives a $3\times6$ matrix as follows.
\begin{center}
 {\renewcommand\arraystretch{0.8}
\setlength{\arraycolsep}{2.8pt}
\small
        $Q_i=\left(
      \begin{array}{cccccc}
 9z_i+2& -9z_i-2&9y_i+1&-9y_i-1& 9x_i-3&3-9x_i\\
 9y_i&-9y_i&9x_i-4&4-9x_i&9z_i+4&-9z_i-4\\
 9x_i-2& 2-9x_i&9z_i+3&-9z_i-3&9y_i-1&1-9y_i\\
\end{array}\right), i\in[1,w-4].
$}
         \end{center}
Taking
\begin{center}
$(A_{i+4, 1},  A_{i+4, 2})=Q_i$, $i\in[1,w-4]$
\end{center}
gives all the blocks $A_{s,t}$ with $(s,t)\in V_3$.

   {\bf Step 4}    Let
          $V_{4,1}=\{(w-2,w),(w-1,w-1),(w,w-2)\}$, and
    \begin{center}
     $V_{4,2}=\{(h-1,h+1),(h,h),(h+1,h-1), h=4,5,\cdots,w-2\}$,
    \end{center}
   and   $V_{4,3}=V_4\setminus (V_{4,1}\cup V_{4,2})$.
Taking
         \begin{center}
     \ \ \ \ \ \ \ \  \ \ \ \ \ \ \ \    \ \ \ \ \ \ \ \  \ \ \ \ \    $A_{w-2,w}=M_{0,1}$,  $A_{w-1,w-1}=M_{0,2}$, $A_{w-2,w}=M_{0,3}$.    \ \ \ \ \ \ \ \ \ \ \ \   \ \ \ \ \ \ \ \  \ \ \ \ \ \ \ \   (\romannumeral3)
         \end{center}
          gives  the blocks $A_{s,t}$ with $(s,t)\in V_{4,1}$.
Taking
\begin{flushleft}\ \ \ \ \ \ \ \ \ \ \ \
$A_{h-1,h+1}=M_{m-(h-4),1}$, $A_{h,h}=M_{m-(h-4),2}$, $A_{h+1,h-1}=M_{m-(h-4),3}$, \ \ \ \ \ \ \ \ \ \ \ \  \ \ \ \ \ \ \ \ (\romannumeral4)\\\ \ \ \ \ \ \ \ \ \ \ \
$A_{h,h+2}=-M_{m-(h-4),1}$, $A_{h+1,h+1}=-M_{m-(h-4),2}$, $A_{h+2,h}=-M_{m-(h-4),3}$. \ \ \ \ \ \ \ \ \ \ \ \ (\romannumeral5)
\end{flushleft}
gives the blocks $A_{s,t}$ with $(s,t)\in V_{4,2}$.
Note that $w-5$  is  even since $n\equiv\pm3\pmod{18}$.

The remaining $M_{i,j}$s are put in $V_{4,3}$ so that they  satisfy the following property.
\begin{center}
 \ \ \ \ \ \ \ \  \ \ \ \ \ \ \ \  \ \ \ \ \ \ \ \  \ \ \ \ \ \ \ \  \ \ \ \ \ \ \ \ if $A_{s,t}=M_{i,j}$ then $A_{t,s}=-M_{i,j}$. \ \ \ \ \ \ \ \ \ \ \ \  \ \ \ \ \ \ \ \  \ \ \ \ \ \ \ \  \ \ \ \ \ \ \ \  \ \ \ \ \ \ \ \ (\romannumeral6)
\end{center}

The following table is used to show the positions of $P_i, Q_i$ and the blocks in $V_{4,1}$ and $V_{4,2}$.

\begin{center}
 {\renewcommand\arraystretch{1.2}
 \setlength{\arraycolsep}{1.2pt}
 \small
        $
      \begin{array}{|c|c|c|c|c|c|c|c|c|c|c|c|}
      \hline
\ \ \ \ \ \ \ \ &\ \ \ \ \ \ \ \ &&& \multirow{2}{1cm}{$P_1$}&\multirow{2}{1cm}{$P_2$}&\multirow{2}{1cm}{$\cdots$}&\multirow{2}{1cm}{$P_{w-6}$}&\multirow{2}{1cm}{$P_{w-5}$}&\multirow{2}{1cm}{$P_{w-4}$}\\\cline{1-4}
&&&&&&&&&\\\hline\cline{3-10}
&\multicolumn{1}{c|}{}\vline&&&M_{m,1}&&\cdots&&&\\  \hline
&\multicolumn{1}{c|}{}\vline&&M_{m,2}&&-M_{m,1}&\cdots&&&\\  \hline
\multicolumn{2}{|c|}{Q_1}\vline&M_{m,3}&&-M_{m,2}&&\cdots&&&\\  \hline
\multicolumn{2}{|c|}{Q_2}\vline&&-M_{m,3}&&&\cdots&&&\\  \hline
\multicolumn{2}{|c|}{ \vdots } \vline&\vdots&\vdots&\vdots&\vdots&\vdots&\vdots&\vdots&\vdots\\  \hline
\multicolumn{2}{|c|}{Q_{w-6}}\vline&&&&&\cdots&&&M_{0,1}\\  \hline
\multicolumn{2}{|c|}{Q_{w-5}}\vline&&&&&\cdots&&M_{0,2}&\\  \hline
\multicolumn{2}{|c|}{Q_{w-4}}\vline&&&&&\cdots&M_{0,3}&&\\  \hline
\end{array}
$
 }
         \end{center}

 \begin{lem} $P_i$, $Q_i$$(i\in[1,w-4])$ have the following properties.

\emph{(\uppercase\expandafter{\romannumeral1})}  $P_i$ is a QMR$^*(6,3)$, the  last four rows of $P_i$ form a QMR$^*(4,3)$, and the   last two rows of $P_i$ form a QMR$^*(2,3)$.

\emph{(\uppercase\expandafter{\romannumeral2})}  $Q_i$ is a QMR$^*(3,6)$s, the last four columns  of $Q_i$ form  a QMR$^*(3,4)$, and  the last two columns  of $Q_i$ form  a QMR$^*(3,2)$.
\end{lem}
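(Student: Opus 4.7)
The plan is to exploit a single structural observation: in both $P_i$ and $Q_i$, consecutive pairs of rows (resp.\ columns) are exact entrywise negatives of one another. Concretely, I would first verify directly from the explicit formula for $P_i$ that rows $2k-1$ and $2k$ satisfy (row $2k$)$=-($row $2k-1)$ for $k=1,2,3$; and from the formula for $Q_i$ that columns $2k-1$ and $2k$ satisfy (col $2k$)$=-($col $2k-1)$ for $k=1,2,3$. This is not an accident of computation: the pre-permutation matrix $\left(\begin{smallmatrix}M_{i,2}\\-M_{i,2}\end{smallmatrix}\right)$ already has its first three rows as negatives of its last three, and the row permutation $(2,4,6,1,3,5)$ is exactly the one that interlaces these three negated pairs into consecutive positions. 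The subsequent column permutation does not disturb the pair-negation structure. The argument for $(M_{i,3}^T,-M_{i,3}^T)$ followed by the column permutation $(1,3,5,2,4,6)$ is entirely analogous.

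With the pair-negation structure in hand, I would derive the column sums of $P_i$ instantly: any selection of rows consisting of whole negated pairs has column sums all equal to $0$. Applied to the full six rows, to the last four rows (rows $3$--$6$, which form the two pairs $\{3,4\}$ and $\{5,6\}$), and to the last two rows (the single pair $\{5,6\}$), this yields the three required column-sum-zero statements at once. The analogous observation on $Q_i$ gives the row-sum-zero statements for the full $Q_i$, its last four columns, and its last two columns.

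For the orthogonal condition (row sums of $P_i$ and column sums of $Q_i$), I would appeal to Lemma~\ref{triple-2}: each $M_i$ is a QMR$^*(3,3,3)$, hence each of its layers $M_{i,1},M_{i,2},M_{i,3}$ is a QMR$^*(3,3)$, which in turn rests on the identity $x_i+y_i+z_i=0$ from Lemma~\ref{triple}. Every row of $P_i$ is, up to sign and a column reordering, a row of $M_{i,2}$, so its entries sum to $0$; likewise every column of $Q_i$ is, up to sign, a row of $M_{i,3}$ (the transposed column), so its entries sum to $0$. Combining with the previous step, $P_i$ is a QMR$^*(6,3)$ and $Q_i$ is a QMR$^*(3,6)$, and the sub-block statements follow immediately because restricting to the last four or last two rows of $P_i$ (resp.\ columns of $Q_i$) leaves every individual row (resp.\ column) sum unchanged.

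There is no real obstacle here; the only place where care is needed is to confirm that the given permutations genuinely produce the claimed consecutive-pair-negation pattern, which is a finite check on six rows (or six columns). Once that pattern is identified, parts (i) and (ii) are two faces of the same coin and reduce to inspection.
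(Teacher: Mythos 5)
Your proof is correct, and since the paper disposes of this lemma with the single line ``the conclusions follow from a direct calculation,'' your argument is essentially that calculation, just organized more transparently: the consecutive-pair negation produced by the interlacing permutations handles all the column sums of $P_i$ (and row sums of $Q_i$) including the sub-block claims, while the row sums of $P_i$ and column sums of $Q_i$ reduce to the fact that each layer $M_{i,2}$, $M_{i,3}$ is a QMR$^*(3,3)$, i.e.\ ultimately to $x_i+y_i+z_i=0$. Both observations check out against the explicit matrices given in the paper, so nothing is missing.
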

 \begin{proof}
The conclusions follow from a direct calculation.
  \end{proof}

A matrix $A$   constructed by  taking the above four steps has the following properties.

\begin{lem} $A$ has the following useful properties.

\emph{(\uppercase\expandafter{\romannumeral3})}  Each block $A_{s,t}$ with $(s,t)\in [3,w]\times[3,w]$ is   a QMR$^*(3,3)$.

\emph{(\uppercase\expandafter{\romannumeral4})}  The sum of the main diagonal of $A_{s,s}$ is zero for each $s\in[3,w]$.

\emph{(\uppercase\expandafter{\romannumeral5})} The equality
$\sum_{i\in[7,h-7]} a_{i,h-i}=0$ holds
for each  $h\in[20,n+7]$.
\end{lem}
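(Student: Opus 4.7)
I will prove the three statements in turn, handling (III) and (IV) quickly and devoting most of the work to (V).

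For (III), I proceed by cases on where $A_{s, t}$ comes from in the construction. If $(s, t) \in V_4$, Step 4 assigns $A_{s, t} = \pm M_{i, j}$ for some $i \in [0, m]$, $j \in \{1, 2, 3\}$; since Lemma~\ref{triple-2} makes each $M_i$ a QMR$^*(3, 3, 3)$, every component $M_{i, j}$ is a QMR$^*(3, 3)$, and negation preserves zero row and column sums. The four remaining positions $(3, 3), (3, 4), (4, 3)$ and $(w, w)$ (with $A_{w, w} = -A_{3, 3}$) are filled in Step 1; I verify directly from the tabulated entries that all row and column sums vanish (for instance, the first row of $A_{3, 3}$ is $(-5, \lambda - 39, 44 - \lambda)$, summing to $0$).

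For (IV), the main diagonal of $A_{3, 3}$ is $(-5) + 21 + (-16) = 0$, and $A_{w, w} = -A_{3, 3}$ inherits this. For $s \in [4, w-1]$, Step 4 forces $A_{s, s} = \pm M_{i, 2}$ for some $i \in [0, m]$, and the main diagonal of $M_{i, 2}$ equals $(9y_i - 3) + 9z_i + (9x_i + 3) = 9(x_i + y_i + z_i) = 0$ by the defining property of the triples from Lemma~\ref{triple}.

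Property (V) is the crux. I decompose $\sum_{i \in [7, h-7]} a_{i, h-i}$ by the $3 \times 3$ block in which each cell sits. Writing $i = 3s - 3 + r$, $j = 3t - 3 + c$ with $r, c \in \{1, 2, 3\}$, the constraint $i + j = h$ becomes $r + c = h - 3(s + t) + 6$, so within any block anti-diagonal $s + t = p$ the contributing cells share a common local level $r + c$. The total therefore splits as $\sum_p S_p(h)$ over block anti-diagonals. For $h \in [20, n+7]$ with $i, h-i \geq 7$, only at most two consecutive values of $p$ actually occur; all participating blocks lie in $V_{4, 2} \cup V_{4, 3}$, together with $A_{3, 4}, A_{4, 3}$ when $p = 7$ (i.e.\ $h \in \{20, 21\}$), while the Step 1 blocks $A_{3, 3}, A_{w, w}$ and the $V_{4, 1}$ triple sit on block anti-diagonals outside the reachable range since $n \geq 21$ forces $2w - 2 > w + 3$.

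Two cancellations then give $S_p(h) = 0$. First, any pair $(s, t), (t, s) \in V_{4, 3}$ with $s \neq t$ satisfies $A_{t, s} = -A_{s, t}$ by (VI); because the set $\{(r, c) : r + c = q\}$ is symmetric under $(r, c) \leftrightarrow (c, r)$, the pair's joint contribution cancels, and the Step 1 pair $(A_{3, 4}, A_{4, 3})$ is handled identically since $A_{4, 3} = -A_{3, 4}$ by inspection. Second, each $V_{4, 2}$ triple $\{\pm M_{i, 1}, \pm M_{i, 2}, \pm M_{i, 3}\}$ placed by Step 4 on a common even block anti-diagonal satisfies $M_{i, 1} + M_{i, 2} + M_{i, 3} = 0$ entrywise (this is exactly the QMR$^*(3, 3, 3)$ property), so the triple contributes zero at each common local level. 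The main obstacle will be the case analysis above --- verifying that for every $h \in [20, n+7]$ the active blocks really do partition into $V_{4, 2}$ triples and negation-pairs, with particular care at the endpoints $h \in \{20, 21\}$ where Step 1 blocks enter and near $h = n+7$ where the sum approaches the opposite corner.
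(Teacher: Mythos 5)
Your proposal is correct and follows essentially the same route as the paper: (III) and (IV) by direct verification of the Step‑1 blocks together with the QMR$^*(3,3,3)$ structure of the $\pm M_{i,j}$, and (V) by cancelling the anti‑diagonal contributions in $V_{4,1}\cup V_{4,2}$ triples via the superposition property and in $V_{4,3}$ (and the $(3,4)/(4,3)$ pair) via the relation $A_{t,s}=-A_{s,t}$. Your bookkeeping by block anti‑diagonals $s+t=p$ and local levels $r+c=q$ is in fact slightly more explicit than the paper's cell‑by‑cell grouping, and your observation that $A_{3,3}$, $A_{w,w}$ and the $V_{4,1}$ triple lie outside the reachable range is a correct refinement the paper leaves implicit.
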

 \begin{proof}
(\uppercase\expandafter{\romannumeral3}) $A_{3,3},A_{3,4},A_{4,3}$ and $A_{w,w}$  are  QMR$^*(3,3)$s by using direct calculation.
For $(s,t)\in V_4$ the blocks $A_{s,t}$  are all QMR$^*(3,3)$s by
(\romannumeral3)-(\romannumeral6).

(\uppercase\expandafter{\romannumeral4}) Clearly, the  sum of the main diagonal of $A_{s,s}$ is zero when $s=3$ and $s=w$.
By (\romannumeral2)  the  sum of the main diagonal of each QMR$^*(3,3)$ $\pm M_{i,2} (i\in [0,m])$ is zero. By (\romannumeral3) and (\romannumeral4)
 the sum of the elements in the main diagonal of $A_{s,s}$ is zero when $s\in[4,w-1]$ .

(\uppercase\expandafter{\romannumeral5})  Let $h\in[20,n+7]$. For   $i\in[7,h-7]$,
 let $s_0=\lceil\frac{i}{3}\rceil, t_0=\lceil\frac{h-i}{3}\rceil$, where $\lceil a\rceil$ is the smallest integer $x$ such that $x\geq a$. Then
 $(s_0,t_0)\in V_4\cup\{(3,4), (4,3)\}$ and $a_{i,h-i}$ is an element of  $A_{s_0,t_0}$.

  If $(s_0,t_0)\in V_{4,1}\cup V_{4,2}$ then
 $s_0+t_0$ is even, denoted by $2d_0$. So $a_{i,h-i}$ belongs to one of $A_{d_0-1,d_0+1}$, $A_{d_0,d_0}$ and $A_{d_0+1,d_0-1}$.
  Since $M_{0}, M_{i}, -M_{i}(i\in [1,m])$ are QMR$^*(3,3,3)$s
 the sum of the $(i\pmod 3,(h-i)\pmod3)$-entries of the  blocks $A_{d_0-1,d_0+1}$, $A_{d_0,d_0}$, $A_{d_0+1,d_0-1}$ is zero by  (\romannumeral3), (\romannumeral4), (\romannumeral5).

 If $(s_0,t_0)\not\in V_{4,1}\cup V_{4,2}$ then
the sum of the $(i\pmod 3,(h-i)\pmod3)$-entries of the blocks  $A_{s_0,t_0}$ and  $A_{t_0,s_0}$ is zero  by (\romannumeral6).
The proof is completed.
   \end{proof}

\begin{thm} \label{mainthm} Let $n\equiv\pm3\pmod{18}$ and $m=(n^2-171)/54$. If there is an  $\mathcal{L}^k_{4,m}$ then there is an ASMS$(n)$.
\end{thm}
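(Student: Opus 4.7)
The plan is to verify that the matrix $A$ assembled in Steps~1--4 of Section~3 is itself an ASMS$(n)$, so that entrywise translation by $\lambda+1$ yields the desired MS$(n)$ containing a general magic subsquare of every order from $3$ to $n-2$. The verification breaks into three tasks: \textbf{(i)} the multiset of entries of $A$ equals $[-\lambda,\lambda]$; \textbf{(ii)} every row, column, and both diagonals of $A$ sum to~$0$, so $A$ is magic with magic sum~$0$; \textbf{(iii)} for each $k\in[3,n-2]$, some $k\times k$ contiguous submatrix of $A$ is a GMS$(k)$.

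Tasks (i) and (ii) are essentially assembled from the preparatory work. For (i), Lemma~\ref{triple-2} provides $27(2m+1)=n^2-144$ distinct entries packed into the $2m+1$ blocks $M_0,\pm M_1,\ldots,\pm M_m$, covering $S$, while Step~1 places the remaining $144$ values of $[-\lambda,\lambda]\setminus S$ in $V_1$; a bookkeeping count then confirms that each $\pm M_{i,j}$ is used exactly once across Steps~2--4 (in $V_2$ via $P_i$, in $V_3$ via $Q_i$, in $V_{4,1}\cup V_{4,2}$ via the explicit assignments, and in $V_{4,3}$ under the pairing rule (vi)). For (ii), Lemma~3.2(III) together with Lemma~3.1 forces all row and column sums outside $V_1$ to vanish, after which the Step~1 table handles the $V_1$ contribution by direct inspection; Lemma~3.2(IV) disposes of the main-diagonal entries inside $A_{s,s}$ for $s\in[3,w]$, leaving the first six diagonal entries to be summed off the Step~1 table; and Lemma~3.2(V) handles all but the twelve extreme back-diagonal entries, which are finally checked using the closed forms for $P_{w-4},P_{w-5},Q_{w-4},Q_{w-5}$.

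Task (iii) carries the main content of the proof. The nested subsquares come from the layered structure of $V_4$: the three positions in $V_{4,1}$ carry $M_{0,1},M_{0,2},M_{0,3}$ and together form a GMS$(3)$ (rows and columns sum to $0$ by the QMR$^*(3,3)$ property, both diagonals sum to $0$ by the zero main-diagonal property of $M_{0,2}$ recorded after Lemma~\ref{triple-2}). Successive outward layers of $\pm M_{m-(h-4),j}$ from $V_{4,2}$ extend this to concentric GMS$(3j)$ for $j=2,3,\ldots$, again by the QMR$^*(3,3,3)$ identity combined with Lemma~3.2(IV)--(V) and the pairing~(vi). For orders $k\not\equiv 0\pmod 3$ the natural approach is to enlarge such a window by one or two bordering rows or columns cut from a neighbouring $P_i$ or $Q_i$; the strengthened QMR$^*(4,3)$ and QMR$^*(2,3)$ refinements supplied by Lemma~3.1 then control the partial row and column sums, while the window is positioned so that its two diagonals meet the $V_4$ anti-diagonal in a canceling pattern.

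\emph{Main obstacle.} The delicate step is precisely the case $k\not\equiv 0\pmod 3$ of task (iii). Rows and columns of a truncated window are immediately controlled by Lemma~3.1, but the paper provides no direct analogue of Lemma~3.2(V) for intermediate windows, so the diagonal cancellations of each truncated $k\times k$ submatrix must be extracted by a careful case analysis on $k\bmod 3$ and on the window's placement inside the $V_4$ anti-diagonal, invoking formulas (ii), (iii), the QMR$^*(3,3,3)$ identity, and the pairing~(vi) to show that the truncated $P_i, Q_i$ contributions cancel against the partial diagonals of the adjacent $V_4$ blocks. Describing this location uniformly in $k$ and verifying the resulting diagonal identities is, in my view, the principal technical burden of the proof.
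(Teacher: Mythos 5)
Your plan matches the paper's architecture for tasks (i) and (ii): the entry count, the row and column sums via Lemmas 3.1 and 3.2(III) plus a direct check of the $12\times12$ corner, and the two diagonals via (IV), (V) and the explicitly computed end terms. The genuine gap is in task (iii), which you yourself flag as unresolved: you never specify where the GMS$(k)$ windows actually sit, and that placement is the substantive content of the theorem. The paper resolves it by anchoring every window at a fixed top-left corner determined by $k\bmod 3$: for $k\equiv0$ the window is $[7,6+k]\times[7,6+k]$ (a union of whole blocks in $V_4$, so (III)--(V) apply directly); for $k\equiv1$ it is $[3,2+k]\times[3,2+k]$; for $k\equiv2$ it is $[5,4+k]\times[5,4+k]$. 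Because all three families share a corner rather than a center, the anti-diagonal of each window is a set $\{(i,H-i)\}$ with $H$ fixed, whose middle portion $i\in[7,H-7]$ is exactly what property (V) controls; only the first and last few anti-diagonal entries (four at each end for $k\equiv1$, two at each end for $k\equiv2$) need the explicit $P_f,Q_f$ formulas, together with an exceptional compensation at $k=13$. Your picture of concentric squares growing outward from a central GMS$(3)$, with truncated windows ``positioned so that the diagonals meet the $V_4$ anti-diagonal in a canceling pattern,'' is not the construction that works here, and you give no candidate placement against which the required cancellations could even be checked.

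Three further concrete omissions. First, $A_{w-1,w-1}=M_{0,2}$ alone is the GMS$(3)$; the three blocks indexed by $V_{4,1}$ occupy an anti-diagonal of block positions and do not ``together form'' any $3\times3$ submatrix. Second, the orders $k=4$ and $k=n-3$ fall outside every uniform window scheme ($k=4$ is too small for the $[3,2+k]$ family as analysed, and a window $[7,6+(n-3)]$ would overflow the matrix), so they need separate constructions: the paper takes $(a_{ij})$ with $(i,j)\in[1,4]\times[8,11]$ for the GMS$(4)$ and the top-left $(n-3)\times(n-3)$ corner for the GMS$(n-3)$, the latter requiring its own back-diagonal computation. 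Third, for the truncated windows the row and column sums through the $V_1$ region rest on the identities $\sum_{j=3}^{12}a_{i,j}=0$ and $\sum_{i=3}^{12}a_{i,j}=0$ for the relevant $i,j$, which are not consequences of Lemmas 3.1--3.2 but separate direct computations on the Step 1 table. Until the window placements are fixed and these checks are carried out, the argument for task (iii) is a statement of intent rather than a proof.
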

 \begin{proof} Let $n$ and $m$ be as assumption.  Suppose that there is an  $\mathcal{L}^k_{4,m}$.  Let $T_i=(x_i,y_i,z_i) (i\in[-m,m])$ be the ordered triple defined as (\romannumeral1) and
 let $M_0, M_i,-M_i, i\in [1,m]$   be the   $2m+1$ QMR$^*(3,3,3)$s defined as (\romannumeral2).  The element  set of the matrix  $A$ under the above construction is exactly $[-\lambda,\lambda]$. We now prove that   $A$   is an ASMS$(n)$.

The blocks $A_{s,t}$ with $(s,t)\in [1,4]\times[1,4]$ form a   QMR$^*(12,12)$ by using direct calculation and (\uppercase\expandafter{\romannumeral3}).
The blocks $A_{s,t}$ with $(s,t)\in [1,2]\times[5,w]$
form a   QMR$^*(6,n-12)$ by (\uppercase\expandafter{\romannumeral1}), and the blocks $A_{s,t}$ with $(s,t)\in[5,w]\times[1,2]$
form a   QMR$^*(6,n-12)$ by (\uppercase\expandafter{\romannumeral2}). Combining with (\uppercase\expandafter{\romannumeral3}) we know that $A$ is a QMR$^*(n,n)$.
The sum of the first six elements $17, -17, 14, -14, -11, 11$ in the main diagonal of $A$  from the upper left to the lower right  is zero. Combining with (\uppercase\expandafter{\romannumeral4}) the main diagonal of $A$ has zero sum.
 The first six elements of back diagonal of $A$ from upper right to lower left
 are
 $-9x_{w-4}-4,\ 9y_{w-4}-3,\ 2-9y_{w-4}, \ 9z_{w-5},\ -9z_{w-5}-1,\ 9x_{w-5}+3$, and the last six elements  are
 $9x_{w-4}-2,\ -9y_{w-4},\ 9y_{w-4}+1,\ -9z_{w-5}-3,\ 9z_{w-5}+4,\ 3-9x_{w-5}.$
 The sum of the above 12 numbers is 0. Combining with (\uppercase\expandafter{\romannumeral5}) the back diagonal of $A$ has zero sum.
 Thus $A$ is an MS$(n)$.

Now we prove that $A$ contains a GMS$(k)$ for  $3\leq k\leq n-2$.
Since $M_{0,2}$ is a GMS$(3)$, $A_{w-1,w-1}$ is a GMS$(3)$ by (\romannumeral3).
 The matrix $(a_{ij})$ with $(i,j)\in [1,4]\times [8,11]$ is a GMS(4) by direct calculation.
 Let $C=(a_{ij}), i,j\in [1,n-3]$. Then $C$ is a GMS$(n-3)$.
 In fact, the matrix $(a_{ij})(i,j\in[1,12])$ is a  QMR$^*(12,12)$ with its main diagonal having zero sum.
 By (\uppercase\expandafter{\romannumeral1})-(\uppercase\expandafter{\romannumeral4}) $C$ is a QMR$^*(n-3,n-3)$ with its main diagonal having zero sum.
 For the $n-3$   elements in the back diagonal of $C$, $Q_{w-5}$ has three elements, $9x_{w-5}-2,-9y_{w-5},9y_{w-5}+1$.
 $Q_{w-6}$ has three elements, $-9z_{w-6}-3,9z_{w-6}+4, 3-9x_{w-6}$. $P_{w-5}$ has three elements $-9x_{w-5}-4, 9y_{w-5}-3,2-9y_{w-5}$.
 $P_{w-6}$ has three elements $9z_{w-6},  -9z_{w-6}-1,9x_{w-6}+3$. The sum of the above 12 elements is zero. Combining with
 (\uppercase\expandafter{\romannumeral5}) we know that the sum of the elements of the back diagonal of $C$ is zero. So, $C$ is a  GMS$(n-3)$.

Let $h\in [5,n-2]\setminus\{n-3\}$. We have \\
  (1) if $h\equiv0\pmod3$ then the matrix $(a_{ij})_{h\times h}$ with  $(i,j)\in [7,6+h]\times[7,6+h]$  is a GMS$(h)$;\\
  (2) if $h\equiv1\pmod3$ then the matrix $(a_{ij})_{h\times h}$ with $(i,j)\in [3,2+h]\times[3,2+h]$ is a GMS$(h)$;\\
  (3) if $h\equiv2\pmod3$ then the matrix $(a_{ij})_{h\times h}$ with $(i,j)\in [5,4+h]\times[5,4+h]$ is a GMS$(h)$.

In fact, for the case (1),   $h\in\{6, 9, \cdots, n-6\}$. By (\uppercase\expandafter{\romannumeral3})-(\uppercase\expandafter{\romannumeral5})    $(a_{i,j})$ with  $(i,j)\in [7,6+h]\times [7,6+h]$  is a GMS$(h)$.

 For Case (2),  $h\in\{7, 10, \cdots, n-2\}$. Let  $B(h)=(a_{ij})_{h\times h}$ where $(i,j)\in [3,2+h]\times [3,2+h]$.
  It is readily verified that  $B(h)$ is a GMS$(h)$ for $h=7,10$ by using direct calculation together with properties (\uppercase\expandafter{\romannumeral3}) and (\uppercase\expandafter{\romannumeral4}).
  Let $h\geq13$.
  By direct  calculation we have $\sum_{j=3}^{12}a_{i,j}=0,i\in[3,6]$ and $\sum_{i=3}^{12}a_{i,j}=0,j\in[3,6]$.
Combining with (\uppercase\expandafter{\romannumeral1})-(\uppercase\expandafter{\romannumeral3}) we know that $B(h)$ is a QMR$^*(h,h)$.
The first four elements of the main diagonal of $B(h)$ from upper left have zero sum.
By (\uppercase\expandafter{\romannumeral4}) we know that the main diagonal of $B(h)$ has zero sum.
Now we check the back diagonal of  $B(h)$.
Note that $h\geq13$. The first four elements and the last four elements of the back diagonal of $B(h)$ from the upper right to the lower left are
 $-9z_{f},9x_{f}+2,-9x_{f}-3,d_1,d_2,9x_{f}-3,4-9x_{f},9z_{f}+3$, where $f=\frac{h-10}{3}$.
 If $h\geq16$ then $d_1=1-9y_{f-1}$ and $d_2=9y_{f-1}-4$.  The sum of these eight numbers is zero.
 If $h=13$ then $d_1=6$ and $d_2=7$. The sum of these eight numbers is $16$. But the remaining five elements are $-13,-9,-16,13,9$, which have the sum $-16$.
 Combining with (\uppercase\expandafter{\romannumeral5}) we know that the back diagonal of $B(h)$ has zero sum for $h\in[13,n-2]$ and $h\equiv1\pmod3$. Thus $B(h)$ is a GMS$(h)$.

Now we consider  Case (3). Clearly, $h\in\{5, 8, \cdots, n-4\}$.  Let   $E(h)=(a_{ij})_{h\times h}$, where $(i,j)\in [5,4+h]\times [5,4+h]$.
 It is readily verified that  $E(h)$ is a GMS$(h)$ for $h=5,8$ by using direct calculation together with properties (\uppercase\expandafter{\romannumeral3}) and (\uppercase\expandafter{\romannumeral4}).
  Let $h\geq11$. By direct calculation we have $\sum_{j=5}^{12}a_{i,j}=0,i\in[5,6]$ and $\sum_{i=5}^{12}a_{i,j}=0,j\in[5,6]$.
  Combining with (\uppercase\expandafter{\romannumeral1})-(\uppercase\expandafter{\romannumeral4}) we know that $E(h)$ is a  QMR$^*(h,h)$, and the main diagonal of $E(h)$ has zero sum.
 The first two elements and the last two elements of the back diagonal of $E(h)$ from the  upper right to lower left are
 $4-9y_{f},9z_{f}+1,-9z_{f}-4,9y_{f}-1$, where $f=\frac{h-8}{3}$. The sum of the four numbers is zero. Combining with (\uppercase\expandafter{\romannumeral5}) we know that the back diagonal of $E(h)$ has zero sum. Thus $E(h)$ is a GMS$(h)$.
Therefore $A$ is an ASMS$(n)$. The proof is completed.
\end{proof}

\noindent {\bf Example 2}   Following  the 4 steps   above  we get an   ASMS$(21)$ below.
 \begin{center}
 {\renewcommand\arraystretch{0.9}
\setlength{\arraycolsep}{0.4pt}
\footnotesize $      \begin{array}{|cccc ccccc ccc|ccc|ccc|ccc|ccc| }
      \hline
17&203&-25&29&-196&200&-208&\underline{194}&\underline{-214}&\underline{198}&\underline{-178}&-20&136&-96&-40&163&-114&-49&145&-87&-58\\
-203&-17&25&-29&196&-200&208&\underline{-194}&\underline{214}&\underline{-198}&\underline{178}&20&-136&96&40&-163&114&49&-145&87&58\\
-22&22&14&211&-207&191&-219&\underline{-183}&\underline{193}&\underline{-209}&\underline{199}&10&-97&-38&135&-115&-47&162&-88&-56&144\\
30&-30&-211&-14&207&-191&219&\underline{183}&\underline{-193}&\underline{209}&\underline{-199}&-10&97&38&-135&115&47&-162&88&56&-144\\
-23&23&215&-215&-11&-212&204&205&-186&216&-210&-6&-39&134&-95&-48&161&-113&-57&143&-86\\
26&-26&182&-182&212&11&-204&-205&186&-216&210&6&39&-134&95&48&-161&113&57&-143&86\\\cline{7-21}
220&-220&-184&184&-217&\multicolumn{1}{c|}{217}&-5&181&\multicolumn{1}{c|}{-176}&-18&-13&31&73&-157&84&37&-139&102&55&-148&93\\
188&-188&179&-179&-190&\multicolumn{1}{c|}{190}&-213&21&\multicolumn{1}{c|}{192}&-9&28&-19&-156&85&71&-138&103&35&-147&94&53\\
-185&185&-195&195&206&\multicolumn{1}{c|}{-206}&218&-202&\multicolumn{1}{c|}{-16}&27&-15&-12&83&72&-155&101&36&-137&92&54&-146\\\cline{7-21}
177&-177&189&-189&-180&\multicolumn{1}{c|}{180}&18&13&\multicolumn{1}{c|}{-31}&78&76&-154&46&-166&120&-73&157&-84&64&-175&111\\
-201&201&-197&197&187&\multicolumn{1}{c|}{-187}&9&-28&\multicolumn{1}{c|}{19}&74&-153&79&-165&121&44&156&-85&-71&-174&112&62\\
-24&24&8&-8&-7&\multicolumn{1}{c|}{7}&-27&15&\multicolumn{1}{c|}{12}&-152&77&75&119&45&-164&-83&-72&155&110&63&-173\\\hline
-133&133&100&-100&33&\multicolumn{1}{c|}{-33}&-151&81&\multicolumn{1}{c|}{70}&-46&166&-120&-78&-76&154&105&67&-172&127&-4&-123\\
99&-99&32&-32&-131&\multicolumn{1}{c|}{131}&82&68&\multicolumn{1}{c|}{-150}&165&-121&-44&-74&153&-79&65&-171&106&-3&-122&125\\
34&-34&-132&132&98&\multicolumn{1}{c|}{-98}&69&-149&\multicolumn{1}{c|}{80}&-119&-45&164&152&-77&-75&-170&104&66&-124&126&-2\\\hline
-160&160&118&-118&42&\multicolumn{1}{c|}{-42}&-37&139&\multicolumn{1}{c|}{-102}&151&-81&-70&-105&-67&172&\underline{-129}&\underline{130}&\underline{-1}&-169&108&61\\
117&-117&41&-41&-158&\multicolumn{1}{c|}{158}&138&-103&\multicolumn{1}{c|}{-35}&-82&-68&150&-65&171&-106&\underline{128}&\underline{0}&\underline{-128}&109&59&-168\\
43&-43&-159&159&116&\multicolumn{1}{c|}{-116}&-101&-36&\multicolumn{1}{c|}{137}&-69&149&-80&170&-104&-66&\underline{1}&\underline{-130}&\underline{129}&60&-167&107\\\hline
-142&142&91&-91&51&\multicolumn{1}{c|}{-51}&-55&148&\multicolumn{1}{c|}{-93}&-64&175&-111&2&-126&124&169&-108&-61&5&-181&176\\
90&-90&50&-50&-140&\multicolumn{1}{c|}{140}&147&-94&\multicolumn{1}{c|}{-53}&174&-112&-62&-125&122&3&-109&-59&168&213&-21&-192\\
52&-52&-141&141&89&\multicolumn{1}{c|}{-89}&-92&-54&\multicolumn{1}{c|}{146}&-110&-63&173&123&4&-127&-60&167&-107&-218&202&16\\\hline
   \end{array}.$}
         \end{center}
The underlined  are a GMS$(3)$ and a GMS$(4)$.  One can easily find the GMS$(k)$ for $5\leq k\leq19$ by Theorem \ref{mainthm}.

\vskip 5pt

 Theorem 7.2 in \cite{Linek2004} provided the following.
\begin{lem} \emph{(\cite{Linek2004})} \label{seq}
  There exists an $\mathcal{L}^k_{4,m}$ whenever $(m,k)\equiv(0,1),(1,0),(2,0),(3,1)\pmod{(4,2)}$, on condition that $m\geq5$ and $\frac{m}{2}(7-m)+1\leq k \leq\frac{m}{2}(m-3)+1$.
 \end{lem}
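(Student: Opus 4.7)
The plan is to prove this result (Theorem 7.2 of \cite{Linek2004}) by explicit parametric construction, one for each of the four admissible residue classes of $(m,k)\pmod{(4,2)}$.

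First I would record the necessity, which will also make transparent why the four classes and this range of $k$ are precisely correct. The total of the $2m$ used positions equals $(2m+1)(m+1)-k$, while each pair $(a_j,b_j)$ with $b_j-a_j=j$ contributes $2a_j+j$, yielding
\[
2\sum_{j=4}^{m+3} a_j \;=\; (2m+1)(m+1)-k-\tfrac{m(m+7)}{2}.
\]
Demanding that the right-hand side be even pins down $k\pmod 2$ in each residue class of $m\pmod 4$, recovering the four listed congruences. The range condition on $k$ comes from the fact that the $a_j$'s are $m$ distinct integers in $[1,2m-3]$, whose minimum and maximum sums translate, via the identity above, to the inequality $\frac{m}{2}(7-m)+1\le k\le\frac{m}{2}(m-3)+1$.

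For sufficiency, the main content, I would give in each residue class a parametric pair-assignment $(a_j,b_j)$, $j\in\{4,\dots,m+3\}$, built by Skolem's staircase device: split $\{4,\dots,m+3\}$ into two sub-blocks (typically by parity of $j$), and inside each sub-block use a monotone staircase $a_j=\alpha+\varepsilon j$, $b_j=a_j+j$, so that the $a_j$'s fill one interval of positions and the $b_j$'s fill another. The sub-blocks are chosen so that the four occupied intervals, together with the hook $\{k\}$, partition $\{1,\dots,2m+1\}$. The presence of the hook provides exactly the extra slack needed for defect $4$; this is why the construction succeeds precisely where pure Langford sequences (with $k$ forbidden) fail.

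To reach every admissible $k$, I would fix one canonical construction with $k=k_0$ at the lower endpoint of the range and apply iterated \emph{local swap moves}: swap two pairs $(a_j,b_j)$, $(a_{j'},b_{j'})$ of compatible differences so that the hook position shifts by $\pm 2$. The stated interval for $k$ is exactly the orbit of these moves, so this step also certifies that the range is tight. The hard part will be the case analysis: each of the four residue classes requires its own staircase formulas, and the end-effects at the boundary of the $k$-range — where swap moves get blocked against the wall — must be patched by a short list of tailored direct constructions for small $m$ (in particular near $m=5,6,7,8$). The remaining verification, namely that the staircase intervals sit inside $[1,2m+1]$ without overlap and that every difference $j\in\{4,\dots,m+3\}$ is used exactly once, is arithmetic and routine but voluminous.
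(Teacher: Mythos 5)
The paper does not actually prove this statement: it is quoted verbatim from Linek and Mor \cite{Linek2004} (their Theorem~7.2), so there is no internal argument to compare yours against. Judged on its own terms, your proposal is an outline rather than a proof, and the outline is missing exactly the part where the content of the theorem lies. The necessity half is essentially right in spirit — the position-sum parity computation does recover the four residue classes $(m,k)\pmod{(4,2)}$ — but your derivation of the lower bound $\frac{m}{2}(7-m)+1\le k$ does not follow from the $a_j$'s being $m$ distinct integers in $[1,2m-3]$: maximizing $\sum a_j$ under that constraint alone gives only $k\ge 1-\frac{3m(m-3)}{2}$, which is weaker. The correct route is to bound $\sum b_j$ from above by $\sum_{i=m+2}^{2m+1}i$ and translate back. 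In any case necessity is not what the lemma asserts.

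The sufficiency half, which is the entire claim, is deferred rather than proved. No staircase formulas are written down for any of the four residue classes, and the mechanism you propose for sweeping out all admissible $k$ — iterated ``local swap moves'' shifting the hook by $\pm2$, whose orbit is ``exactly'' the stated interval — is a restatement of the theorem, not an argument: you would need to show that from an arbitrary configuration a compatible swap always exists that moves the hook in the desired direction, and that the process never gets stuck strictly inside the range. That is precisely where the difficulty sits, and it is why the proof in \cite{Linek2004} consists of many explicit parametric families indexed by the residues of both $m$ and $k$, with separate patches at the endpoints of the $k$-range and for small $m$. Your plan identifies the right general strategy (Skolem-type direct constructions plus a parity/counting necessity check), but until the constructions are actually exhibited and verified it cannot stand as a proof; citing \cite{Linek2004}, as the paper does, is the appropriate resolution.
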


\vskip 5pt
\noindent {\bf Theorem \ref{1.1}}
 There exists an  ASMS$(n)$ for $n\equiv\pm3\pmod{18}$.
\vskip 5pt
  \begin{proof}
The necessary condition for an  ASMS$(n)$ to exist is $n\geq5$ by definition. Let $n\equiv3,15\pmod{18}$. An ASMS$(15)$ was given by Abe (\cite{Abe}). For $n\geq21$, let $m=(n^2-171)/54$, then $m\geq5$. Denote  $n=3(6u\pm1)$,  we have $m=6u^2\pm2u-3=4u^2+2u(u\pm1)-3\equiv1\pmod4$.
 By Lemma \ref{seq} there exists an $\mathcal{L}^k_{4,m}$ for some $k\equiv0\pmod2$. So, there exists  an  ASMS$(n)$  by Theorem \ref{mainthm}.
The proof is completed.
\end{proof}

\noindent\textbf{Concluding remarks}

The existence of an ASMS$(n)$ for $n\equiv\pm3\pmod{18}$ is completely solved. To give a complete solution of  Abe's conjecture  2.12, the following cases should be considered: (1) $n\equiv9\pmod{18}$; (2) $n\equiv\pm\pmod{6}$.

 \vskip 10pt
 \noindent\textbf{Acknowledgements} The authors would like to thank Professor Zhu Lie of Suzhou University for  his encouragement and many helpful suggestions
 especially on the application of extend Langford sequences. Thank Professor Chen Kejun and Professor Pan Fengchu for careful reading and some suggestions.  

\end{document}